    \ifnum\bookmarkget{level}>1 % 2 is for no number of subsubsection
      \renewcommand*{\numberline}[1]{}%
\newtheorem{theorem}{Theorem}[section]
\newtheorem{prop}[theorem]{Proposition}
\newtheorem{lemma}[theorem]{Lemma}
\newtheorem{defn}[theorem]{Definition}
\newtheorem{remark}[theorem]{Remark}
\newenvironment{rem}{\begin{remark}\rm}{\end{remark}}
\newtheorem{example}[theorem]{Example}
\newtheorem{exercise}[theorem]{Exercise}
\def\a{\alpha}
\def\b{\beta}
\def\ga{\gamma}
\def\de{\delta}
\def\ep{\epsilon}
\def\la{\lambda}
\def\si{\sigma}
\def\P{{\mathbb P}}
\def\C{{\mathbb C}}
\def\Z{{\mathbb Z}}
\def\F{{\cal F}}
\def\O{{\cal O}}
\def\CZ{{\cal Z}}
\def\bd{\partial}
\def\dim{\mbox{\rm dim\,}}
\def\Aut{\mbox{Aut}}
\def\Spacing{{\displaystyle\phantom{\bigoplus_A}}}
\def\st{{\scriptscriptstyle \times}}
\def\arr{\xrightarrow{\hspace*{0.4cm}}}
\def\OP{{\rm OP}}
\def\ba{{\bm \alpha}}
\def\bb{{\bm \beta}}
\def\bd{{\bm \delta}}
\def\be{{\bm \eta}}
\def\Cliff{{\textsf{Cliff}}}
\def\SS{{\textsf{S}}}
\def\SC{{\textsf{C}}}
\def\SB{{\textsf{B}}}
\def\SH{{\textsf{SH}}}
\def\f{{\bf f}}
\def\lb{\langle}
\def\rb{\rangle}
\def\SP{{\rm SP}}
\def\ba{{\bm \alpha}}
\def\bb{{\bm \beta}}
\def\bd{{\bm \delta}}
\def\be{{\bm \eta}}
\def\1{{\bm 1}}
\def\vt{\vartheta}
\def\Cob{{\bf Cob}}
\def\Vect{{\bf Vect}}
\title{{\bf A note on Gunningham's formula}}
\author{Junho Lee\thanks{The author was partially supported by  NSF grant  DMS-1206192.} }
\date{\empty}
\begin{document}

\maketitle

\begin{abstract}
\medskip
Gunningham \cite{G} constructed an extended topological quantum field theory (TQFT)
to obtain  a closed formula for all spin Hurwitz numbers.
In this note, we use the gluing theorem in \cite{LP2}
to re-prove Gunningham's formula. We also describe a TQFT formalism naturally induced  from the gluing theorem.
\end{abstract}

\vskip.15in
%%%%%%%%%%%%%%%%%%%%%%%%%%%%%%%%%%%%%%%

%%%%%%%%%%%%%%%%%%%%%%%%%%%%%%%%%%%%%%%%%%%%%%%%%%%%%%%%%%%%
%%%%%%%%%%%%%%%%%  Introduction %%%%%%%%%%%%%%%%%%%%%%%%
%%%%%%%%%%%%%%%%%%%%%%%%%%%%%%%%%%%%%%%%%%%%%%%%%%%%%%%%%%%%%%
\setcounter{section}{0}

\section{Introduction}

Let $X$ be a surface of general type with a smooth canonical divisor $D$.
The complex curve $D$ has genus $h=K_X^2+1$ and
the normal bundle $N$ to $D$ is a theta characteristic
on $D$ (that is, $N^2=K_D$) with $p\equiv h^0(N) \equiv \chi(\O_X)$ (mod 2).
The pair $(D,N)$ is called a {\em spin curve} of genus $h$ with parity $p$.
The Gromov-Witten invariants of $X$ are the same as the local GW invariants of the spin curve $(D,N)$
that depend only on $(h,p)$.
In particular,  for $d>0$ the dimension zero local GW invariant of the spin curve $(D,N)$ is given by the formula
\begin{equation}\label{dim=0GW}
GT_d^{h,p} = \sum_f \frac{(-1)^{h^0(f^*N)}}{|\Aut(f)|},
\end{equation}
where the sum is over all degree $d$ etale covers $f$ (see \cite{LP1,KL,MP}).
One can calculate these local invariants by extending  the (weighted) signed sum to certain ramified covers, which are the spin Hurwitz numbers.

A partition $\a\vdash d$  is odd if all parts in $\a$ are  odd. We set
$$
\OP(d) = \{\,\a\vdash d\,:\, \a\  \text{is odd}\,\}.
$$
Fix $k$ points $y^1,\cdots,y^k$ in $D$ and
consider
degree $d$  holomorphic maps $f:C\to D$ from possibly disconnected curves $C$ of Euler characteristic $\chi(C)$
that are ramified only over the fixed points $y^i$ with ramification profile
$\a^i=(\a^i_1,\cdots,\a^i_{\ell_i})\in \OP(d)$.
By the Riemann-Hurwitz formula, the (ramified) covers $f$ satisfy
\begin{equation}\label{RH}
d\chi(D) - \chi(C)+ \sum_{i=1}^k(\ell(\a^i)-d) = 0,
\end{equation}
where $\chi(D)=2-2h$ and $\ell(\a^i)$ is the length of the partition $\a^i$.
By the Hurwitz formula,
the twisted line bundle
\begin{equation}\label{TwistedTheta}
N_f = f^*N\otimes \O_C\big(\,\sum_{i,j} \tfrac12 (\a^i_j-1) x^i_j\,\big)
\end{equation}
is a theta characteristic on $C$ where
$f^{-1}(y^i)=\{x^i_j\}$ and $f$ has  multiplicity $\a^i_j$ at $x^i_j$.
We define the parity $p(f)$ of a map $f$ as
$$
p(f) \equiv h^0(N_f)\ \ \ (\text{mod 2}).
$$
Given $\a^1,\cdots,\a^k\in \OP(d)$, the spin Hurwitz number of genus $h$ and parity $p$ is defined as a (weighted) sum of  covers $f$ satisfying
(\ref{RH}) with sign determined by  parity $p(f)$:
\begin{equation}\label{def-spinHurwitz}
H^{h,\pm}_{\a^1,\cdots,\a^k} = \sum_f \frac{(-1)^{p(f)}}{|\Aut(f)|}
\end{equation}
where $+$ or $-$ denotes the parity of the spin curve $(D,N)$.
If $k=0$ (or unramified) then this is the etale spin Hurwitz number that equals
the local invariant (\ref{dim=0GW}).
We will call $\chi(C)$ in (\ref{RH}) the {\em domain Euler characteristic} for the spin Hurwitz number
(\ref{def-spinHurwitz}).

\medskip
Eskin, Okounkov and Pandharipande \cite{EOP} first studied the spin Hurwitz numbers for genus $h=1$
with trivial theta characteristic, that is, $(h,p)=(1,-)$.
They related the parity of maps with  combinatorics of the Sergeev group $\SC(d)$.
A partition $\la=(\la_1,\cdots,\la_\ell)$ of $d$ is strict if $\la_1>\cdots>\la_\ell$.
Let $\SP(d)$ denote the set of strict partitions of $d$ and set
$$
\SP^+(d) = \{\,\la\in \SP(d)\,:\, \ell(\la)\ \text{is even}\,\}
\ \ \ \text{and}\ \ \
\SP^-(d) = \{\,\la\in \SP(d)\,:\, \ell(\la)\ \text{is odd}\,\}.
$$
The irreducible spin $\SC(d)$-supermodules  $V^\la$ are indexed by strict partitions $\la \in \SP(d)$ and  the conjugacy class corresponding to an odd partition $\a^i\in \OP(n)$ acts in $V^\la$
as multiplication by a constant, which is the central character $\f_{\a^i}(\la)$
(see Section 1).
%Eskin, Okounkov and Pandharipande
%directly calculated the parity $p(f)$ to show:

\begin{theorem}[\cite{EOP}]
\label{T:EOP}
With the notation as above,
\begin{equation}\label{EOP}
H^{1,-}_{\a^1,\cdots,\a^k} =
2^{\frac{\chi(C)}{2}}\Big(
\sum_{\la\in \SP^+(d)} \prod_i \f_{\a^i}(\la) \ \ -
\sum_{\la\in \SP^-(d)} \prod_i \f_{\a^i}(\la)
\ \Big),
\end{equation}
where $\chi(C)$ is the domain Euler characteristic.
\end{theorem}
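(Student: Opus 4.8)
The plan is to translate the geometric count \eqref{def-spinHurwitz} into the representation theory of the Sergeev superalgebra $\SC(d)$ via the monodromy correspondence, and then to recognize the resulting (super)trace as the right-hand side of \eqref{EOP}. First I would set up the Frobenius/monodromy dictionary. A degree $d$ cover $f:C\to D$ of the torus $D$, ramified over $y^1,\dots,y^k$ with profiles $\a^i\in\OP(d)$, is determined up to isomorphism by its monodromy homomorphism $\pi_1\big(D\setminus\{y^i\}\big)\to S_d$, i.e.\ by a tuple $(\si,\tau,c_1,\dots,c_k)$ with each $c_i$ in the conjugacy class $C_{\a^i}$ and $[\si,\tau]\,c_1\cdots c_k=1$ (the commutator coming from the single handle of the torus), taken modulo simultaneous conjugation. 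The weight $1/\Aut(f)$ is exactly the symmetrization weight, so that, ignoring the sign, the count equals $\tfrac1{d!}\,\#\{(\si,\tau,c_1,\dots,c_k)\}$; disconnected domains are permitted, matching the possibly disconnected $C$.

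The crux is to incorporate the sign $(-1)^{p(f)}$, where $p(f)\equiv h^0(N_f)\ (\mathrm{mod}\ 2)$ and $N_f$ is the theta characteristic \eqref{TwistedTheta}. Here I would invoke the Atiyah--Mumford theory: the parity of a theta characteristic is a deformation invariant equal to the Arf invariant of the quadratic refinement of the mod-$2$ intersection form on $H_1(C;\Z/2)$ determined by the spin structure $N_f$. For the torus with \emph{trivial} theta characteristic this quadratic form is governed by the pulled-back spin structure, and its Arf invariant can be read off directly from the monodromy data $(\si,\tau,c_i)$. The content of Eskin--Okounkov--Pandharipande is that this Arf invariant is precisely the $2$-cocycle defining Schur's double cover $\wt S_d$: lifting each generator to $\wt S_d$, the product $[\wt\si,\wt\tau]\,\wt c_1\cdots\wt c_k$ equals $z^{\,p(f)}$ for the central involution $z$. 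Thus $(-1)^{p(f)}$ is reproduced by replacing $\C[S_d]$ with the spin (negative) part of $\C[\wt S_d]$, on which $z=-1$, and assembling the odd classes inside $\SC(d)$. The restriction to $\OP(d)$ is forced already by the integrality of the twist in \eqref{TwistedTheta}, and on the algebraic side it is exactly the odd classes that act as central scalars on the spin supermodules.

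With the sign algebraized, I would run the character-theoretic super-Fourier inversion. Writing the signed count as a trace in the center of $\SC(d)$ and using that the irreducible spin supermodules $V^\la$ are indexed by $\la\in\SP(d)$ with the odd class $C_{\a^i}$ acting by the scalar $\f_{\a^i}(\la)$, the count becomes $\sum_{\la\in\SP(d)} w(\la)\prod_i \f_{\a^i}(\la)$. Because the base genus is $h=1$, the usual dimension weight $(\dim V^\la/|G|)^{2-2h}$ has exponent $0$ and drops out, leaving only the weight contributed by the handle and by the Clifford/super structure of $V^\la$. The final bookkeeping identifies this weight: the relative sign separating $\SP^+(d)$ from $\SP^-(d)$ is the type of the supermodule $V^\la$ (type M versus type Q, equivalently the parity of $\ell(\la)$), and the overall factor $2^{\chi(C)/2}$ collects the powers of $2$ attached by the Clifford algebra, organized by Riemann--Hurwitz \eqref{RH}, which for the torus gives $\chi(C)=\sum_i(\ell(\a^i)-d)$.

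The step I expect to be the main obstacle is the middle one: proving that the geometric parity $p(f)$ coincides with the Schur-cover cocycle evaluated on the monodromy. This requires both the Atiyah--Mumford identification of $h^0(N_f)\bmod 2$ with an Arf invariant and the explicit combinatorial computation of that Arf invariant through the structure of $\SC(d)$; everything preceding it is formal monodromy counting, and everything following it is standard super-character orthogonality.
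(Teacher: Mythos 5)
First, a point of reference: the paper does not prove Theorem~\ref{T:EOP} at all --- it is quoted from \cite{EOP} --- and the paper's own contribution in this direction is the parallel genus-zero computation (\ref{h=0}) in Section~2, carried out by the EOP method. Measured against that template, your outer two steps (monodromy counting with weights $1/|\Aut(f)|$, and the final character-orthogonality step over $\la\in\SP(d)$, with the dimension factors dropping out in genus one) are in the right spirit. But your middle step --- the claim that $(-1)^{p(f)}$ equals the evaluation of the Schur-cover cocycle on the monodromy of $f$, i.e.\ $[\wt\si,\wt\tau]\,\wt c_1\cdots \wt c_k = z^{p(f)}$ in a double cover of the symmetric group $\SS(d)$ --- is not what \cite{EOP} prove, and it is false as stated. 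Take $d=2$, $k=0$, and the trivial theta characteristic on the torus. Both double covers of $\SS(2)$ are abelian, so $[\wt\si,\wt\tau]=1$ for every commuting pair, and your formula predicts even parity for all four covers. Geometrically, however, the three connected \'etale double covers $f:C\to D$ have $N_f=f^*\O_D=\O_C$ with $h^0(\O_C)=1$, hence odd parity; only the disconnected cover is even. (The resulting count $\tfrac12(1-1-1-1)=-1$ agrees with (\ref{EOP}), since $\SP^+(2)=\emptyset$ and $\SP^-(2)=\{(2)\}$.) The structural reason is that the sign in genus one with odd theta characteristic is a genuinely ``super'' (Clifford/supertrace) phenomenon, and it also depends on the choice of spin structure on the base, which a cocycle evaluated on the $\SS(d)$-monodromy of $f$ cannot see.

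What EOP actually prove, and what the paper's Section~2 uses for the genus-zero analogue, requires the intermediary of the double covers $f_\rho$. One writes $(-1)^{p(f)} = 2^{\chi(C)/2-n}\sum_{\rho\in J_2(C)}(-1)^{q_L(\rho)}$ (the Arf-invariant identity, (\ref{Arf}) in the paper); each $2$-torsion class $\rho$ gives a degree-$2d$ cover $f_\rho$ whose monodromy lands in the hyperoctahedral group $\SB(d)\subset\SS(2d)$, and the criterion (Proposition~\ref{EOP-thm1}, Theorem~1 of \cite{EOP}) is that $q_L(\rho)=0$ if and only if this $\SB(d)$-monodromy admits a canonical lift to the Sergeev group $\SC(d)$ --- a double cover of $\SB(d)$, not of $\SS(d)$. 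The average over $J_2(C)$, converted by Lemma~\ref{lemma3} into counts of $\SB(d)$- and $\SC(d)$-homomorphisms and then into central characters via (\ref{h-cha}), is what produces both the sign and the prefactor $2^{\chi(C)/2}$; the latter does not come merely from ``powers of $2$ attached by the Clifford algebra.'' So the gap is not only that your key step is left unproved: as formulated it would fail, and repairing it means replacing the single cocycle evaluation by the $J_2(C)$-average together with the canonical-lift criterion for the covers $f_\rho$.
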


Recently, Gunningham \cite{G} constructed a fully extended (spin) topological quantum
field theory (TQFT). His extended TQFT gives a formula for all spin Hurwitz numbers.
For each strict partition $\la\in\SP(d)$, let $V^\la$ be as above and  set
\begin{equation}\label{dim}
c_\la = \frac{\dim V^\la}{|\SC(d)|} %\ \ \ \ \ \text{where}\ \ \ \ \ |\SC(d)|=2^{d+1}d!.
\end{equation}

\begin{theorem}[\cite{G}]
\label{T:G}
\begin{equation}\label{G}
H^{h,\pm}_{\a^1,\cdots,\a^k} =
2^{\frac{\chi(C)+\chi(D)}{2}}
\Big(
\sum_{\la\in \SP^+(d)} 2^{\frac{\chi(D)}{2}}c_\la^{\chi(D)} \prod_i \f_{\a^i}(\la)
\ \pm\sum_{\la\in \SP^-(d)} c_\la^{\chi(D)}\prod_i \f_{\a^i}(\la)
\Big),
\end{equation}
where $\chi(D)=2-2h$ and $\chi(C)$ is the domain Euler characteristic.
\end{theorem}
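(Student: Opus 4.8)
The plan is to feed the genus-one identity of Theorem~\ref{T:EOP} into the gluing theorem of \cite{LP2} and propagate it to arbitrary genus by a transfer-matrix (TQFT) computation. First I would record what the gluing theorem supplies: cutting the spin curve $(D,N)$ along an embedded circle expresses $H^{h,\pm}_{\a^1,\cdots,\a^k}$ as a pairing of states in a vector space ${\cal H}$ attached to the circle, times the universal Euler-characteristic weight $2^{\chi(C)/2}$ (which is a genuine power of $2$, since each $\a^i\in\OP(d)$ forces $\ell(\a^i)\equiv d$ and hence $\chi(C)$ even by (\ref{RH})). I claim ${\cal H}$ has a distinguished basis $\{v_\la\}_{\la\in\SP(d)}$ diagonalizing every gluing operation. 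Because the cutting circle carries a spin structure, ${\cal H}$ is $\Z/2$-graded, and the grading is exactly the splitting $\SP(d)=\SP^+(d)\sqcup\SP^-(d)$ by the parity of $\ell(\la)$; this is the source of the two sums in (\ref{G}).

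Next I would identify the elementary operators in the $v_\la$-basis. Inserting a branch point over $y^i$ with odd profile $\a^i$ acts diagonally, $v_\la\mapsto\f_{\a^i}(\la)\,v_\la$: the monodromy around $y^i$ lies in the conjugacy class of $\a^i$, and the signed count $(-1)^{p(f)}/\Aut(f)$ converts the class-algebra action into multiplication by the central character $\f_{\a^i}(\la)$, the same eigenvalue occurring in Theorem~\ref{T:EOP}. The only genuinely new datum is the \emph{handle operator} $T$ that raises the genus by one (equivalently, lowers $\chi(D)$ by $2$). Writing $Tv_\la=t_\la v_\la$, matching powers of $\chi(D)$ in (\ref{G}) against the genus-one values forces
\begin{equation}\label{handle-eigen}
t_\la\ =\ \begin{cases} (4c_\la^2)^{-1}, & \la\in\SP^+(d),\\[2pt] (2c_\la^2)^{-1}, & \la\in\SP^-(d),\end{cases}
\end{equation}
with $c_\la=\dim V^\la/|\SC(d)|$ as in (\ref{dim}), so the core computation is to derive (\ref{handle-eigen}) directly from the gluing theorem. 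I expect it to emerge as $t_\la=|\SC(d)|^2\big/\big(2^{\varepsilon_\la}(\dim V^\la)^2\big)$, where $\varepsilon_\la\in\{1,2\}$ is governed by the type ($M$ or $Q$) of the simple spin supermodule $V^\la$ and hence by the parity of $\ell(\la)$; the extra factor of $2$ records the difference between closing a handle through an ordinary trace and through a supertrace.

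With the operators in hand I would assemble a genus $h$ domain as the genus-one building block with $(h-1)$ handles attached, all $k$ branch points retained. Since the branch and handle operators are simultaneously diagonal, the $\la$-summand of $H^{h,\pm}$ factors as (genus-one weight)$\,\times t_\la^{\,h-1}\times\prod_i\f_{\a^i}(\la)$. The genus-one weight and the relative sign between the two sectors for parity $(-)$ are fixed by Theorem~\ref{T:EOP}; the parity-$(+)$ case is obtained by gluing in the non-bounding (odd) spin structure on a handle circle, which toggles the sign on the odd-graded $\SP^-$ sector and produces the $\pm$ in (\ref{G}). Substituting (\ref{handle-eigen}) gives handle contributions $(4c_\la^2)^{-(h-1)}$ on $\SP^+$ and $(2c_\la^2)^{-(h-1)}$ on $\SP^-$; since $\chi(D)=2-2h=-2(h-1)$ these equal $2^{\chi(D)}c_\la^{\chi(D)}$ and $2^{\chi(D)/2}c_\la^{\chi(D)}$ respectively. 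Rewriting $2^{\chi(C)/2}\,2^{\chi(D)}=2^{(\chi(C)+\chi(D))/2}\,2^{\chi(D)/2}$ and $2^{\chi(C)/2}\,2^{\chi(D)/2}=2^{(\chi(C)+\chi(D))/2}$ then regroups the factors exactly as in (\ref{G}).

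The hard part will be the second step: the direct evaluation of the handle eigenvalue (\ref{handle-eigen}), and especially the bookkeeping of the powers of $2$. These powers are invisible in ordinary (non-spin) Hurwitz theory and come solely from the super/Clifford structure of the Sergeev algebra, where even- and odd-length strict partitions label supermodules of different type. Getting the sector-dependent factor $2^{\varepsilon_\la}$ correct, and reconciling it consistently with the parity weight $(-1)^{p(f)}$ under gluing so that the bounding and non-bounding spin handles reproduce the two rows of (\ref{handle-eigen}) together with the $\pm$ sign, is where the essential care is required.
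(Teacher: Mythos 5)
Your overall architecture --- diagonalize the gluing operations, let each branch point act by the central character $\f_{\a^i}(\la)$, and propagate along the genus by a handle operator --- is the same as the paper's Section~3, and your bookkeeping of the powers of $2$ is consistent with the paper's intermediate formula (\ref{another-G}). But there is a genuine gap in what you take as input. The gluing theorem (Theorem~\ref{T:LP2}) only composes surfaces: genus never decreases under either operation, and parities add. Starting from your sole geometric input, the genus-one parity-minus formula of Theorem~\ref{T:EOP}, you can reach $(h,-)$ for $h\geq 1$ (handle addition) and $(h,+)$ for $h\geq 2$ (gluing two parity-minus pieces), but you can never reach $(0,+)$ or $(1,+)$, both of which are covered by (\ref{G}). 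The step you invoke to produce the $\pm$ sign --- ``gluing in the non-bounding (odd) spin structure on a handle circle'' to toggle the sign on the $\SP^-$ sector --- is not an operation supplied by Theorem~\ref{T:LP2}. In the paper, such an involution $A=\SH\left(D_1^1(0,-)\right)$ appears only in Section~4, \emph{after} the formula is proved, and only because $H(0,-)$ is there defined by fiat; it is not a spin Hurwitz number (the unique theta characteristic on $\P^1$ is $\O(-1)$, which is even). So this step of your argument begs the question.

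The paper closes exactly this gap by computing a second base case directly: the genus-zero parity-plus numbers (\ref{h=0}), proved in Section~2 by the method of \cite{EOP} --- the quadratic form attached to the theta characteristic $N_f$, canonical lifts of the $\SB(d)$-monodromy to the Sergeev group $\SC(d)$, and the counting formula (\ref{Serre}). With both (\ref{h=0}) and (\ref{EOP}) in hand, the gluing theorem together with the orthogonality relation of Lemma~\ref{key} yields (\ref{another-G}) for all $(h,\pm)$ by induction: $(h,+)$ comes from $(0,+)$ by adding handles, and $(h,-)$ from gluing $(1,-)$ to $(h-1,+)$. Note also that you have located the difficulty in the wrong place: your handle eigenvalues are not hard to obtain, since applying the self-gluing formula to Theorem~\ref{T:EOP} with two extra insertions $\ga,\ga$ and contracting via Lemma~\ref{key} produces them immediately (this is precisely how the paper's induction runs). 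The genuinely new computation your proposal is missing is the genus-zero evaluation (\ref{h=0}); without it, or some equivalent low-genus parity-plus input, no amount of care with the super/Clifford bookkeeping will recover the full statement of (\ref{G}).
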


Independently, Parker and the author \cite{LP2} adapted
 the degeneration method of the GW theory to obtain a gluing theorem for spin Hurwitz numbers.
For a partition $\ga\vdash d$, let $\ga(k)$ be the number of parts of size $k$ in $\ga$ and set
\begin{equation}\label{order-z}
z_\ga = \prod_k k^{\ga(k)} \ga(k)!
%\ \ \ \ \ \text{and}\ \ \ \ \ \vt_\ga=2^{\ell(\ga)+1}z_\ga.
\end{equation}

\begin{theorem}[\cite{LP2}]
\label{T:LP2}
Let $\a^1,\cdots,\a^s,\b^1,\cdots,\b^r\in \OP(d)$. We have
\begin{align*}
H^{h,p}_{\a^1,\cdots,\a^s,\b^1,\cdots,\b^r} &=
\sum_{\ga\in \OP(d)} z_\ga\cdot
H^{h_1,p_1}_{\a^1,\cdots,\a^s,\ga}\cdot H^{h_2,p_2}_{\b^1,\cdots,\b^r,\ga},
\\
H^{h+1,p}_{\a^1,\cdots,\a^s} &=
\sum_{\ga\in\OP(d)} z_\ga\cdot H^{h,p}_{\a^1,\cdots,\a^s,\ga,\ga},
\end{align*}
where $h=h_1+h_2$ and $p\equiv p_1+p_2$ (mod 2).
\end{theorem}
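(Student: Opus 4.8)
The plan is to prove both identities by a single mechanism: a nodal degeneration of the base spin curve $(D,N)$ together with the degeneration formula of the (local) Gromov--Witten/Hurwitz theory, while carefully tracking the theta-characteristic parity. First I would construct, for the separating identity, a one-parameter family of spin curves over a disk $\Delta$ whose generic fiber is a smooth genus $h$ curve carrying a theta characteristic of parity $p$, and whose central fiber is a nodal spin curve $D_0=D_1\cup_q D_2$, with $D_i$ smooth of genus $h_i$ carrying a theta characteristic $N_i$ of parity $p_i$, a single node $q$, and $h=h_1+h_2$. The ramification points with profiles $\a^i$ specialize onto $D_1$ and those with profiles $\b^j$ onto $D_2$. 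For the non-separating identity I would instead degenerate the genus $h+1$ curve to an irreducible one-nodal curve of geometric genus $h$ whose normalization is $(D,N)$ equipped with two extra marked points $q_1,q_2$ identified at the node.

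Next I would apply the degeneration formula. A cover $f:C\to D_t$ of the generic fiber limits to a cover $f_0:C_0\to D_0$ that is relative to the node; $C_0$ splits as $C_1\cup C_2$ glued over $q$ with a common ramification profile $\ga$, and summation over all gluings produces the weight $z_\ga$, which is exactly the order of the centralizer of a permutation of cycle type $\ga$, i.e. the monodromy around the node. That $\ga$ must lie in $\OP(d)$ is forced by the spin structure: for $N_{f_0}$ to be a theta characteristic, the twisting divisor $\sum_j \tfrac12(\ga_j-1)x_j$ at the node preimages must be integral, so every part of $\ga$ is odd. This already yields the topological skeleton of both formulas, namely the $z_\ga$-weighted sum over $\ga\in\OP(d)$ of the product of the counts of the two (respectively the one) relative pieces.

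The crux, and the step I expect to be the main obstacle, is the additivity of the sign. I must show $(-1)^{p(f)}=(-1)^{p_1(f_1)}(-1)^{p_2(f_2)}$, that is $h^0(N_{f_0})\equiv h^0(N_{f_1})+h^0(N_{f_2})\ (\text{mod }2)$, and that this is inherited by the smooth fiber so that $p\equiv p_1+p_2$. The tool is the normalization sequence
$$0\ \To\ N_{f_0}\ \To\ \nu_*\nu^* N_{f_0}\ \To\ \bigoplus_j \C_{x_j}\ \To\ 0$$
for the normalization $\nu:C_1\sqcup C_2\to C_0$ at the $\ell(\ga)$ node preimages $x_j$; taking cohomology expresses $h^0(N_{f_0})$ through $h^0(N_{f_1})+h^0(N_{f_2})$ and the rank of a gluing map on the node fibers. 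Since $N_{f_0}$ restricts to the theta characteristics $N_{f_i}$ on the components, self-duality forces the relevant Euler characteristics to vanish, so the parity of $h^0$ becomes a deformation-invariant additive quantity in the sense of Mumford--Atiyah and Cornalba's theory of theta characteristics on nodal curves. The delicate point is to check that the $\ell(\ga)$ gluing points contribute an even correction, so that no spurious parity shift appears, and that invariance of the parity under smoothing the node transports the central-fiber identity to $D_t$; this is precisely where the oddness of $\ga$ and the deformation invariance of the spin parity are used in tandem.

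Finally I would reassemble. Matching the $z_\ga$-weighted, $\ga$-summed topological count with the multiplicative sign produces the product $H^{h_1,p_1}_{\a^1,\cdots,\a^s,\ga}\cdot H^{h_2,p_2}_{\b^1,\cdots,\b^r,\ga}$ in the separating case, and the single factor with two copies of $\ga$ in the non-separating case. Throughout, allowing disconnected domains $C$ is essential, since it is exactly the disconnected covers that make the degeneration formula close up under cutting and regluing.
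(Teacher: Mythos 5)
This note never proves Theorem~\ref{T:LP2}: it is imported verbatim from \cite{LP2}, and the only internal commentary on it is Remark~\ref{GGluing}, which observes that the gluing theorem would follow from Gunningham's formula (\ref{another-G}) plus the orthogonality relation of Lemma~\ref{key} (a derivation that is circular within this note, since (\ref{another-G}) is deduced here from the gluing theorem, but is legitimate if one takes Gunningham's independent proof \cite{G} as input). So your proposal must be measured against the proof in \cite{LP2} itself, and in outline you have reproduced its strategy correctly: degenerate the target spin curve to a nodal spin curve, apply a degeneration/sum formula, and track the parity.

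The genuine gap is the step where you claim that the node profile $\ga$ is ``forced'' to lie in $\OP(d)$ because the twisting divisor in (\ref{TwistedTheta}) must be integral. That argument only shows that the spin Hurwitz numbers of the two relative pieces are \emph{undefined} by formula (\ref{TwistedTheta}) when $\ga$ has an even part; it does not show that such limits fail to occur or contribute zero. Limits of covers of the smooth fiber with even ramification over the node certainly exist (they already occur for ordinary Hurwitz covers, and the spin structure does not obstruct the covers themselves), so the degeneration formula a priori yields a sum over all $\ga\vdash d$, and the theorem requires proving that the signed contributions of every profile with an even part cancel. This is exactly the content of the Vanishing Theorem in \cite{LP2} --- relative local GW invariants with even contact order vanish --- which is the technical heart of that paper and is established with genuinely analytic/GW-theoretic input, not by integrality bookkeeping. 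A secondary weakness is your parity-additivity step: a line bundle on $C_0=C_1\cup C_2$ restricting to the theta characteristics $N_{f_i}$ cannot itself be a theta characteristic on $C_0$ (its square restricts to $K_{C_i}$, not to the dualizing sheaf, which restricts to $K_{C_i}$ twisted by the nodes), so the limit object must be handled via Cornalba's quasi-stable spin curves or torsion-free sheaves, and the claim that the $\ell(\ga)$ gluing points ``contribute an even correction'' is precisely what needs proof rather than something one may assume.
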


Our main goal is to reprove Gunningham's formula (\ref{G}).
To that end, we need to calculate the $(h,p)=(0,+)$ spin Hurwitz numbers.

Section~1 gives a brief review of the representation theory of the Sergeev group $\SC(d)$ and
a key fact (Lemma~\ref{key}) about the central characters of $\SC(d)$.

Section~2 follows the approach of \cite{EOP} to show:
\begin{equation}\label{h=0}
H^{0,+}_{\a^1,\cdots,\a^k} =
2^{\frac{\chi(C)+2}{2}}
\Big(
\sum_{\la\in \SP^+(d)} 2c_\la^{2} \prod_i \f_{\a^i}(\la)
\ +\sum_{\la\in \SP^-(d)} c_\la^{2}\prod_i \f_{\a^i}(\la)
\Big).
\end{equation}

In Section 3, we use the gluing theorem  with (\ref{EOP}) and (\ref{h=0}) to prove
the formula (\ref{G}).
We also observe that  the  formula (\ref{G}) gives the gluing theorem (see Remark~\ref{GGluing}).

The spin Hurwitz numbers are not defined for $(h,p)=(0,-)$ since the only theta characteristic on $\P^1$ is $\O(-1)$.
In Section 4, we first extend the gluing theorem to include the case $(h,p)=(0,-)$ and then describe  a TQFT formalism naturally induced from the (extended) gluing theorem.

\section{Representations of the Sergeev group}

\addtocounter{section}{0}

This section  reviews the representation theory of the Sergeev group  relevant to our discussion.
We generally follow the notation and terminology of \cite{EOP}.
For proofs and more details, we refer to \cite{J1,J2,Sg} and  \cite[Ch.3]{CW} and references therein.

\subsection{Sergeev group}

The Sergeev group $\SC(d)$ is the semidirect product
$$
\SC(d) = \Cliff(d) \rtimes \SS(d),
$$
where $\Cliff(d)$ is the Clifford group generated by  $\xi_1,\cdots,\xi_d$ and a
central element $\ep$ subject to the relations
$$
\xi_i^2 = 1,\ \ \ \ \ep^2=1,\ \ \ \ \xi_i\xi_j = \ep\,\xi_j\xi_i \ \ (i\ne j),
$$
and the symmetric group $\SS(d)$ on $d$ letters acts on $\Cliff(d)$ by
permuting the $\xi_i$'s.

The group $\SC(d)$ is a double cover of the hyperoctahedral group $\SB(d)=\Z_2^d \rtimes \SS(d)$.
Since $\Cliff(d)/\{1,\ep\} \cong \Z_2^d$, setting $\ep=1$ gives
a short exact sequence of groups
\begin{equation*}\label{central}
0\ \arr\ \Z_2\ \arr\ \SC(d)\ \overset{\theta}{\arr}\ \SB(d)\ \arr\ 0.
\end{equation*}
The group $\SB(d)$ embeds in
the symmetric group $\SS(2d)$ on the set $\{\pm 1, \cdots,\pm d\}$ via
$$
\xi_ig(\pm k) =
\left\{
\begin{array}{rr}
\mp g(k)\ \ &\text{if}\ g(k)=i \\
\pm g(k)\ \ &\text{if}\ g(k)\ne i
\end{array}\right.
$$
Notice that $\SB(d)$ is the centralizer of the involution $k\to -k$ in $\SS(2d)$.

\subsection{Conjugacy classes}
\label{conj}

The symmetric group $\SS(d)$ embeds in $\SB(d)$ and $\SC(d)$.
An element $g$ of $\SB(d)$ and $\SC(d)$ is a {\em pure permutation} if $g\in \SS(d)$.
Define a $\Z_2$-grading on $\SC(d)$  by setting
\begin{equation}\label{grading}
\deg \xi_i = 1,\ \ \ \deg(g) = \deg(\ep) = 0,\ \ \ g\in\SS(d).
\end{equation}
An even (resp. odd) conjugacy class is a conjugacy class of an even (resp. odd) element.
Observe that for each conjugacy class $C$, either $C\cap \ep C=0$ or $C=\ep C$.
\begin{itemize}
\item[(a)]
Let $C_\ga$ be the conjugacy class in $\SC(d)$ of a pure permutation $g$  of cycle type $\ga\in \OP(d)$. Then, $C_\ga$ and $\ep C_\ga$ are disjoint even conjugacy classes and
\begin{equation}\label{num-conj}
|C_\ga| = |\ep C_\ga| = \frac{|\SC(d)|}{2^{\ell(\ga)+1}z_\ga}
\end{equation}
where $z_\ga$ is the order of the centralizer of $g$ in $\SS(d)$ given by  (\ref{order-z}).
\item[(b)]
We can write all conjugacy classes of $\SC(d)$ as
\begin{equation}\label{all-conj}
\underbrace{C_1,\ep C_1,\cdots,C_m,\ep C_m}_{\text{even}}\,,\,
\underbrace{\hat{C}_1,\ep\hat{C}_1,\cdots,\hat{C}_q,\ep\hat{C}_q}_{\text{odd}}\,,\,
\underbrace{\tilde{C}_1,\cdots,\tilde{C}_s}_{\ep\,\tilde{C}_i=\tilde{C}_i},
\end{equation}
where $m=|\OP(d)|=|\SP(d)|$ and $q=|\SP^-(d)|$.
\end{itemize}
The denominator $2^{\ell(\ga)+1}z_\ga$ in (\ref{num-conj}) is the order of the centralizer of $g$ in $\SC(d)$.

\begin{defn}
For a partition $\ga\vdash d$, we define
$$
\vt_\ga = 2^{\ell(\ga)+1}z_\ga.
$$
\end{defn}

\subsection{Spin {\rm $\SC(d)$}-supermodules}

For a finite group $G$, let $G^\wedge$ denote the set of irreducible complex representations of $G$.
The central element $\ep$ acts as multiplication by either $+1$ or $-1$ on each $V\in \SC(d)^\wedge$. If $\ep$ acts on $V$ as multiplication by $1$, then $V\in \SB(d)^\wedge$.
Let $\SC(d)^\wedge_-$ be the set of irreducible complex representations of $\SC(d)$ on which
$\ep$ acts as multiplication by $-1$. We have
\begin{equation}\label{decomp-irr}
\SC(d)^\wedge = \SB(d)^\wedge \,\cup\, \SC(d)^\wedge_-.
%\ \ \ \ \text{and}\ \ \ \
%|\SC(d)^\wedge_-|\ =\ |\SP(d)|+|\SP^-(d)|.
\end{equation}

The grading (\ref{grading}) makes the group algebra $\C[\SC(d)]$  a semisimple associative superalgebra. A {\em spin $\SC(d)$-supermodule} is a supermodule  over $\C[\SC(d)]$ on which  $\ep$ acts as multiplication by \nolinebreak $-1$.
The irreducible (or simple) spin $\SC(d)$-supermodules are indexed by strict partitions $\la\in \SP(d)$.
For each $\la\in\SP(d)$, let $V^\la$ be its corresponding irreducible spin $\SC(d)$-supermodule.
\begin{itemize}
\item[(c)]
For $\la\in \SP^+(d)$, we have $V^\la\in \SC(d)^\wedge_-$.
\item[(d)]
For $\la\in \SP^-(d)$, we have $V^\la=V^\la_0\oplus V^\la_1$ (as a module over $\C[\SC(d)]$) such that $V^\la_0, V^\la_1\in \SC(d)^\wedge_-$ and they are not isomorphic.
\end{itemize}

\subsection{Central characters}

For $\la\in\SP(d)$, let $\zeta^\la$ denote the character of the irreducible $\C(d)$-supermodule $V^\la$.
By (\ref{all-conj}), the character $\zeta^\la$ is determined by its values
$\zeta^\la(C_\ga)=-\zeta^\la(\ep C_\ga)$  on
even conjugacy classes $C_\ga$ and  $\ep C_\ga$ where $\ga\in\OP(d)$.
For $\la,\mu\in \SP(d)$,
\begin{equation}\label{inner-prod1}
\lb \zeta^\la,\zeta^\mu\rb =
\sum_{\ga\in \OP(d)} \frac{2}{\vt_\ga} \zeta^\la(C_\ga) \zeta^\mu(C_\ga)  =
\left\{
\begin{array}{cl}
\delta_{\la\mu}\ \ &\text{if}\ \ \la\in \SP^+(d) \\
2\delta_{\la\mu}\ \ &\text{if}\  \ \la\in \SP^-(d)
\end{array}\right.
\end{equation}
where $\lb\cdot ,\cdot \rb$ is the inner product on the space of class functions of  the finite group $\SC(d)$.

For each $\ga\in \OP(d)$,  the class sum $\overline{C}_\ga=\sum_{x\in C_\ga} x$ has degree zero and lies in the center of the superalgebra $\C[\SC(d)]$, so
it acts on  $V^\la$ as multiplication by a constant.
This constant is the central character $\f_\ga(\la)$
obtained from the formula
\begin{equation}\label{ccf}
\f_\ga(\la) = \frac{|C_\ga|}{\dim V^\la} \,\zeta^\la(C_\ga).
%\ =\ \frac{1}{\vt_\ga c_\la}\,\zeta^\la(C_\ga)
\end{equation}
%where $c_\la$ is the constant defined in  (\ref{dim}).
When $\la\in \SP^-(d)$, the central character  $\f_\ga(\la)$ of $V^\la=V^\la_0\oplus V^\la_1$ equals to the central characters of the irreducible representations $V^\la_0,V^\la_1\in \SC(d)^\wedge_-$.

Now, (\ref{inner-prod1}) and (\ref{ccf}) give a fact central to our subsequent discussions.

\begin{lemma}\label{key}
Let $c_\la$ be as in (\ref{dim}). We have
\begin{equation*}\label{conclusion}
\sum_{\ga\in \OP(d)} \vt_\ga\,\f_\ga(\la)\f_\ga(\mu) =
\left\{
\begin{array}{cl}
\delta_{\la\mu}/2c_\la^2\ \ &\text{if}\ \ \la\in \SP^+(d)
 \\
\delta_{\la\mu}/c_\la^2\ \ &\text{if}\  \ \la\in \SP^-(d)
\end{array}\right.
\end{equation*}
%where $\delta_{\la\mu}=1$ if $\la=\mu$ and 0 otherwise.
\end{lemma}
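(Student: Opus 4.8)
The plan is to reduce the claimed identity to the character orthogonality relation (\ref{inner-prod1}) by rewriting the central characters $\f_\ga(\la)$ back in terms of the character values $\zeta^\la(C_\ga)$. The only ingredients needed are the defining formula (\ref{ccf}) for $\f_\ga(\la)$, the order formula (\ref{num-conj}) for the conjugacy classes, and the definition (\ref{dim}) of $c_\la$; no new representation theory enters, so this is essentially a bookkeeping computation.

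First I would invert (\ref{ccf}) to express $\zeta^\la(C_\ga)$ through $\f_\ga(\la)$. Since (\ref{num-conj}) gives $|C_\ga| = |\SC(d)|/\vt_\ga$ and (\ref{dim}) gives $\dim V^\la = c_\la\,|\SC(d)|$, substituting these into (\ref{ccf}) collapses the constants neatly into
$$
\zeta^\la(C_\ga)\ =\ \frac{\dim V^\la}{|C_\ga|}\,\f_\ga(\la)\ =\ c_\la\,\vt_\ga\,\f_\ga(\la),
$$
and likewise $\zeta^\mu(C_\ga)=c_\mu\,\vt_\ga\,\f_\ga(\mu)$.

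Next I would feed these two expressions into (\ref{inner-prod1}). Each summand $\tfrac{2}{\vt_\ga}\zeta^\la(C_\ga)\zeta^\mu(C_\ga)$ becomes $2\,c_\la c_\mu\,\vt_\ga\,\f_\ga(\la)\f_\ga(\mu)$, so that
$$
\lb\zeta^\la,\zeta^\mu\rb\ =\ 2\,c_\la c_\mu\sum_{\ga\in\OP(d)}\vt_\ga\,\f_\ga(\la)\f_\ga(\mu),
$$
whence $\sum_{\ga}\vt_\ga\,\f_\ga(\la)\f_\ga(\mu)=\tfrac{1}{2c_\la c_\mu}\lb\zeta^\la,\zeta^\mu\rb$. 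Reading off the two cases of (\ref{inner-prod1}) then finishes the proof: for $\la=\mu\in\SP^+(d)$ the right side equals $1/(2c_\la^2)$, for $\la=\mu\in\SP^-(d)$ it equals $2/(2c_\la^2)=1/c_\la^2$, and the Kronecker delta in (\ref{inner-prod1}) forces the sum to vanish when $\la\ne\mu$ (the positive factors $c_\la,c_\mu$ being harmless).

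Since the argument is a direct substitution, there is no genuine obstacle; the single point to verify carefully is the arithmetic matching of constants, namely that the factor $2/\vt_\ga$ in (\ref{inner-prod1}) cancels exactly one of the two powers of $\vt_\ga$ supplied by the character values, leaving a single $\vt_\ga$ together with the prefactor $2c_\la c_\mu$. This bookkeeping is precisely what produces the extra factor of $2$ distinguishing the $\SP^+(d)$ and $\SP^-(d)$ cases.
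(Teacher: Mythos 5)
Your proof is correct and is precisely the argument the paper intends: the paper derives Lemma~\ref{key} by combining the orthogonality relation (\ref{inner-prod1}) with the central character formula (\ref{ccf}), exactly as you do, using (\ref{num-conj}) and (\ref{dim}) to identify $\zeta^\la(C_\ga)=c_\la\,\vt_\ga\,\f_\ga(\la)$. Your write-up simply makes explicit the constant bookkeeping that the paper leaves to the reader.
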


\subsection{Center}

Let $C_1,\cdots,C_k$ be conjugacy classes in a finite group $G$ and let
$n(C_1,\cdots,C_k)$ be the number of solutions $(g_1,\cdots,g_k)\in C_1\times\cdots \times C_k$ of the equation
$g_1\cdots g_k=1$. Then we have
\begin{equation}\label{Serre}
n(C_1,\cdots,C_k)
%\ =\ \frac{|C_1|\cdots|C_k|}{|G|}
%\sum_{\la \in G^\wedge} \frac{\zeta^\la(C_1)\cdots \zeta^\la(C_k)}{(\dim V^\la)^{k-2}}
 = |G| \sum_{\la \in G^\wedge} \Big(\frac{\dim V^\la}{|G|}\Big)^2 \prod_i \f_{C_i}(\la),
\end{equation}
where $\f_{C_i}(\la)$ are the central characters of $V^\la$
(cf. Theorem 7.2.1 of \cite{Sr}).

The central element $\ep$ acts on the center $\CZ(\C[\SC(d)])$ of the (ungraded) group algebra $\C[\SC(d)])$
with $\pm 1$ eigenvalues. We denote by
\begin{equation}\label{Hspace}
\CZ_0^+ \ \subset\   \CZ
\end{equation}
the $(-1)$-eigenspace
consisting of even degree elements.
This space has a basis
$$
\left\{u_\ga= \tfrac12(\overline{C}_\ga-\ep\overline{C}_\ga):\ga\in \OP(d)\right\}.
$$
For notational simplicity, we set $\1=(1^d)$. Since
$u_\a u_\b\in \CZ_0^+$ and $u_{\1}u_\a=u_\a$ for all $\a,\b\in\OP(d)$,
the space $\CZ_0^+$ is a commutative associative algebra with identity $u_\1$.

By (c) and (d) in Section 1.3 and  (\ref{Serre}), we obtain:

\begin{lemma}\label{str-cons}
If $u_\a u_\b = \sum_\ga a^\ga_{\a \b} u_\ga$, then the structure constants $a_{\a\b}^\ga$ are given by
\begin{align*}
a^\ga_{\a\b} &=
%\frac{|C_\a||C_\b|}{|\SC(d)|}
%\sum_{\la\in \SC(d)^\wedge}
%\frac{\zeta^\la(C_\a) \zeta^\la(C_\b)\zeta^\la(C_\ga-\ep C_\ga) }{\dim V^\la}
%\ =\
%2\sum_{\la\in\SC(d)^\wedge_-} \vt_\ga c_\la^2\,
%\f_\a(\la)\f_\b(\la)\f_\ga(\la) \notag \\
%&=\
\vt_\ga\Big(\hspace{-3pt}\sum_{\la\in \SP^+(d)} 2 c_\la^2 \, \f_\a(\la)\f_\b(\la)\f_\ga(\la)
\ + \!\sum_{\la\in \SP^-(d)}  c_\la^2 \, \f_\a(\la)\f_\b(\la)\f_\ga(\la) \Big).
\end{align*}
\end{lemma}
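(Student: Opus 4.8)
The plan is to realize the structure constant $a^\ga_{\a\b}$ as a \emph{signed} count of factorizations in $\SC(d)$ and then evaluate that count through the Serre formula (\ref{Serre}). Since $\ep$ is central with $\ep^2=1$, a direct expansion gives
\begin{equation*}
u_\a u_\b\ =\ \tfrac14(\overline{C}_\a-\ep\overline{C}_\a)(\overline{C}_\b-\ep\overline{C}_\b)\ =\ \tfrac12(1-\ep)\,\overline{C}_\a\overline{C}_\b.
\end{equation*}
Because $\overline{C}_\a\overline{C}_\b$ is a product of even class sums, only even classes occur in its expansion $\overline{C}_\a\overline{C}_\b=\sum_D M^D_{\a\b}\,\overline{D}$, where $M^D_{\a\b}$ is the number of ways to write a fixed $g\in D$ as $g=xy$ with $x\in C_\a$, $y\in C_\b$. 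Applying $\tfrac12(1-\ep)$ kills every self-associate even class $\tilde{C}_i$ (as $\ep\overline{\tilde{C}_i}=\overline{\tilde{C}_i}$) and sends $\overline{C}_\ga\mapsto u_\ga$, $\ep\overline{C}_\ga\mapsto -u_\ga$, so
\begin{equation*}
a^\ga_{\a\b}\ =\ M^{C_\ga}_{\a\b}-M^{\ep C_\ga}_{\a\b}.
\end{equation*}

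Next I would pass to the Serre count. Writing $M^D_{\a\b}=n(C_\a,C_\b,D^{-1})/|D|$ and using that the central character of $\ep D^{-1}$ on $V^\la$ is $\eta_\la$ times that of $D^{-1}$, where $\eta_\la=\pm1$ is the eigenvalue of $\ep$, the formula (\ref{Serre}) yields
\begin{equation*}
a^\ga_{\a\b}\ =\ \frac{|\SC(d)|}{|C_\ga|}\sum_{\la\in\SC(d)^\wedge}\Big(\frac{\dim V^\la}{|\SC(d)|}\Big)^2(1-\eta_\la)\,\f_\a(\la)\f_\b(\la)\f_\ga(\la),
\end{equation*}
after identifying $\f_{C_\ga^{-1}}(\la)=\f_\ga(\la)$. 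The factor $1-\eta_\la$ is the crux: it annihilates every $\la\in\SB(d)^\wedge$ (where $\ep$ acts as $+1$) and equals $2$ on every $\la\in\SC(d)^\wedge_-$, so by the decomposition (\ref{decomp-irr}) the sum collapses onto $\SC(d)^\wedge_-$ with an overall factor $2$.

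Finally I would rewrite the sum over $\SC(d)^\wedge_-$ as a sum over strict partitions via (c) and (d) of Section 1.3. For $\la\in\SP^+(d)$ the supermodule $V^\la$ is a single element of $\SC(d)^\wedge_-$ and contributes $c_\la^2\,\f_\a(\la)\f_\b(\la)\f_\ga(\la)$. For $\la\in\SP^-(d)$ the two nonisomorphic pieces $V^\la_0,V^\la_1\in\SC(d)^\wedge_-$ each have dimension $\tfrac12\dim V^\la$ and share the central characters of $V^\la$, so together they contribute $2\cdot(\tfrac12 c_\la)^2=\tfrac12 c_\la^2$ times $\f_\a(\la)\f_\b(\la)\f_\ga(\la)$. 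Substituting $|C_\ga|=|\SC(d)|/\vt_\ga$ from (\ref{num-conj}) and $c_\la=\dim V^\la/|\SC(d)|$ from (\ref{dim}) then produces exactly the asserted formula, with the coefficient $2$ on $\SP^+(d)$ and $1$ on $\SP^-(d)$.

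The main obstacle is the bookkeeping of constants in the last two steps: correctly combining the factor $2$ produced by $1-\eta_\la$ with the factor $\tfrac12$ coming from the half-dimensional splitting of $V^\la$ for $\la\in\SP^-(d)$, which is precisely what creates the asymmetry between the two sums; one should also check the harmless identity $\f_{C_\ga^{-1}}(\la)=\f_\ga(\la)$ for odd $\ga$. As an independent verification, the resulting expression satisfies $\sum_\ga a^\ga_{\a\b}\,\f_\ga(\mu)=\f_\a(\mu)\f_\b(\mu)$ for every $\mu\in\SP(d)$ by Lemma~\ref{key}, and since the central characters separate the basis $\{u_\ga\}$ this linear system has a unique solution, which pins down the structure constants.
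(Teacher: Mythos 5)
Your proof is correct and is essentially the paper's own argument: the paper proves this lemma with the one-line citation ``by (c) and (d) in Section 1.3 and (\ref{Serre})'', and your computation --- expanding $u_\a u_\b=\tfrac12(1-\ep)\,\overline{C}_\a\overline{C}_\b$, evaluating the factorization counts via (\ref{Serre}) so that the factor $1-\eta_\la$ collapses the sum onto $\SC(d)^\wedge_-$, and then using (c), (d) with the half-dimensional splitting of $V^\la$ for $\la\in\SP^-(d)$ to produce the coefficients $2c_\la^2$ and $c_\la^2$ --- is precisely the calculation the paper leaves implicit. All of your bookkeeping (including $C_\ga^{-1}=C_\ga$ for pure permutations of odd cycle type and $|\SC(d)|/|C_\ga|=\vt_\ga$ from (\ref{num-conj})) checks out.
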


\section{Calculation of genus zero spin Hurwitz numbers}

In this section, we calculate the spin Hurwitz numbers of genus $h=0$, following the arguments of \cite{EOP}.
We  generally follow the notation and terminology in \cite{EOP}.

\subsection{Quadratic form}

Consider a degree $d$  map
\begin{equation}\label{intro-map}
f:C \ \to\   \P^1
\end{equation}
ramified only over fixed points $y^1,\cdots,y^k\in \P^1$ with
ramification profile $\a^i\in \OP(d)$ at $y^i$
satisfying (\ref{RH}).
Let $N=\O(-1)$ and
let $L=N_f$ denote the theta characteristic on $C$ defined by (\ref{TwistedTheta}).
For each (connected) component $C_i$ of $C$ where $1\leq i\leq n$,
the theta characteristic $L_i=L|_{C_i}$ on $C_i$ determines a
quadratic form $q_{L_i}$ on the group  $J_2(C_i)$ of elements of order two in the Jacobian of $C_i$ by
$$
q_{L_i}(\rho_i) \equiv h^0(L_i\otimes \rho_i) + h^0(L_i)\ \ (\text{mod}\ 2)
$$
such that
$$
(-1)^{h^0(L_i)} = 2^{-g(C_i)}\sum_{\rho_i\in J_2(C_i)} (-1)^{q_{L_i}(\rho_i)}
$$
For $\rho=(\rho_1,\cdots,\rho_n)$ in $J_2(C)=J_2(C_1)\times \cdots \times J_2(C_n)$, let
$q_L(\rho) = \underset{i}{\sum} \,q_{L_i}(\rho_i)$. Then
\begin{equation}\label{Arf}
(-1)^{p(f)} = \prod_i (-1)^{h^0(L_i)} = 2^{\frac{\chi(C)}{2}-n} \sum_{\rho\in J_2(C)} (-1)^{q_L(\rho)}
\end{equation}
%where $n$ is the number of (connected) components of $C$.

\subsection{Canonical lift}

Each $\rho\in J_2(C)$ defines an unramified double cover $C_\rho\to C$ which, when composed with  $f$, gives  a degree $2d$ cover
\begin{equation*}\label{double-cover}
f_\rho:C_\rho\ \to\ \P^1.
\end{equation*}
Let $\si$ be  the fixed point free
involution in the symmetric group $\SS(2d)$ given by
the covering transformation permuting the sheets of $C_\rho\to C$.
By our construction, the monodromy group of $f_\rho$  lies in the centralizer of
the involution $\si$ in $\SS(2d)$,
which is the group $\SB(d)$ (see Section 1.1).
The monodromy of $f_\rho$ thus defines a homomorphism
\begin{equation}\label{mono-hom}
M_{f_\rho}:\pi_1(\P^{\st})\ \to\ \SB(d),
\end{equation}
where $\P^{\st}=\P^1\setminus \{y^1,\cdots,y^k\}$.

One can choose a small loop $\delta_i$ encircling only the branch point $y_i$ such that
\begin{itemize}
\item
$\pi_1(\P^{\st}) = \big\lb\, \de_1,\cdots,\de_k \,|\,\prod_i \delta_i\ =\ 1\,\big\rb$,
\item
$M_{f_\rho}(\delta_i)$ is conjugate to a pure permutation $g_i$  of cycle type $\a^i\in\OP(d)$ in $\SB(d)$.
\end{itemize}
Then by (a) in Section~1.2,
$$
\theta^{-1}\big(M_{f_\rho}(\delta_i)\big)\ \subset \ C_{\a^i}\ \sqcup\ \ep C_{\a^i}
$$
where $C_{\a^i}$ is the conjugacy class of the pure permutation $g_i$  in the Sergeev group $\SC(d)$.
The monodromy of $f_\rho$ is said to have a {\em canonical lift to} $\SC(d)$ if there exists
a homomorphism $\widehat{M}_{f_\rho}:\pi_1(\P^{\st})\to \SC(d)$
such that $\widehat{M}_{f_\rho}(\delta_i)\in C_{\a^i}$ for all $i$ and the diagram commute:
$$
\xymatrix{
 & \SC(d)   \ar[d]^\theta  \\
 \pi_1(\P^{\st})  \ar[ur]^{\widehat{M}_{f_\rho}} \ar[r]_{\ \ M_{f_\rho}} & \SB(d)  }
$$

The following fact is a special case of Theorem~1 of \cite{EOP}: the case of $\P^1$.

\begin{prop}\label{EOP-thm1}
$q_{L}(\rho)=0$ if and only if the monodromy of $f_\rho$ has a canonical lift to $\SC(d)$.
\end{prop}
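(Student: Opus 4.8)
This is the restriction of Theorem~1 of \cite{EOP} to the target $\P^1$ with $N=\O(-1)$, so the plan is to reproduce the mechanism of that theorem while tracking where the base and the twist enter. There are two quantities to be matched: the analytic quadratic form $q_L(\rho)$, built from dimensions of spaces of sections, and a group-theoretic obstruction to lifting the $\SB(d)$-valued monodromy of $f_\rho$ to the Sergeev double cover $\SC(d)$ with the prescribed conjugacy classes. First I would make each of these into a $\Z_2$-valued function of $\rho\in J_2(C)$ and then show that the two functions coincide.

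First I would set up the lifting obstruction. For each $i$ the generator $\de_i$ has $M_{f_\rho}(\de_i)$ conjugate to a pure permutation of odd cycle type $\a^i$, so by (a) of Section~1.2 its preimage under $\theta$ splits as the disjoint union $C_{\a^i}\sqcup\ep C_{\a^i}$; oddness of $\a^i$ is exactly what makes these two classes distinct and singles out a canonical lift $\widehat g_i\in C_{\a^i}$. With such lifts chosen, the relation $\prod_i\de_i=1$ in $\pi_1(\P^{\st})$ forces $\widehat g_1\cdots\widehat g_k\in\theta^{-1}(1)=\{1,\ep\}$, so this product equals $\ep^{\,o(\rho)}$ for a well-defined $o(\rho)\in\Z_2$; since $\ep$ is central and replacing each $\widehat g_i$ by another representative of $C_{\a^i}$ leaves the product in the same coset, $o(\rho)$ is independent of all choices. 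By construction a canonical lift $\widehat M_{f_\rho}$ exists if and only if $o(\rho)=0$, so the Proposition is equivalent to the identity $o(\rho)=q_L(\rho)$.

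It remains to prove $o(\rho)=q_L(\rho)$, which is the heart of the matter. The plan is to exhibit both sides as quadratic refinements of one and the same $\Z_2$-bilinear form on $J_2(C)=H^1(C;\Z_2)$ determined by the theta characteristic $L$: the form $q_L$ is the Riemann--Mumford (Atiyah) quadratic form of the spin structure $L$, whose polarization is the mod $2$ intersection pairing, while the sign $o(\rho)$ coming from the Clifford relations $\xi_i\xi_j=\ep\,\xi_j\xi_i$ is the Arf-type refinement produced by the spinor data underlying $\SC(d)\to\SB(d)$. Granting that both are quadratic refinements of the intersection pairing, I would check equality on a convenient generating set of $J_2(C)$---classes $\rho$ supported near a single branch point, or a standard symplectic basis adapted to $f$---where the odd cycle type makes the local lift explicit and $q_L$ reduces to a short local $h^0$ computation. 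The main obstacle is precisely this identification: controlling the $\ep$-bookkeeping in the Clifford group against the mod $2$ parities of $h^0(L_i\otimes\rho_i)$, that is, proving that the spin cover $\SC(d)$ realizes the same quadratic form as the analytically defined $q_L$. Once agreement on a basis is established, bilinearity of the common polarization propagates the equality to all of $J_2(C)$, finishing the proof.
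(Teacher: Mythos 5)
The paper does not actually prove this proposition: it is quoted verbatim as the $\P^1$ case of Theorem~1 of \cite{EOP}, so the paper's ``proof'' is a citation. Your proposal instead tries to reprove that theorem, and its first half is correct and clean: since each $M_{f_\rho}(\de_i)$ has exactly two $\theta$-preimages $\{g,\ep g\}$, and the classes $C_{\a^i}$ and $\ep C_{\a^i}$ are disjoint (this is where oddness of $\a^i$ enters), each $\de_i$ has a \emph{unique} admissible lift $\widehat g_i\in C_{\a^i}$; the product $\widehat g_1\cdots\widehat g_k$ lies in $\theta^{-1}(1)=\{1,\ep\}$, and a canonical lift exists iff this product equals $1$. (Your well-definedness argument is slightly misstated --- the $\widehat g_i$ must lift the specific elements $M_{f_\rho}(\de_i)$, not be arbitrary representatives of $C_{\a^i}$, and for arbitrary representatives the product need not lie in $\{1,\ep\}$ --- but uniqueness of the admissible lift makes the issue moot.) This correctly reduces the proposition to the identity $o(\rho)=q_L(\rho)$.

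The genuine gap is that this identity is never proved: it is exactly the content of the theorem of \cite{EOP} being specialized. Your strategy rests on two claims that you explicitly ``grant'': (i) that $\rho\mapsto o(\rho)$ is a quadratic refinement of the mod~$2$ intersection pairing, i.e. $o(\rho_1+\rho_2)=o(\rho_1)+o(\rho_2)+\langle\rho_1,\rho_2\rangle$, and (ii) that $o$ and $q_L$ agree on a generating set of $J_2(C)$. Claim (i) is a substantive statement about the central extension $\Z_2\to\SC(d)\to\SB(d)$ --- one must match the $\ep$-sign produced by Clifford-algebra bookkeeping against the topology of the double cover $C_\rho\to C$ --- and claim (ii) requires computing $h^0(L\otimes\rho)\bmod 2$ for explicit two-torsion classes, which is not a ``short local $h^0$ computation'': the parity of a theta characteristic is a global, deformation-invariant quantity, and controlling it is where the real analytic work lies. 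Items (i) and (ii) together constitute essentially the entire proof in \cite{EOP}; your text names them as ``the main obstacle'' and then assumes them. The only part of the second half that is genuinely complete is the final bookkeeping (two quadratic refinements of the same pairing differ by a linear functional, so agreement on generators propagates), which is the easy step. So what you have is a correct reduction plus a plausible plan, not a proof --- whereas the paper, for its part, proves nothing and simply imports the statement.
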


\subsection{Weighted count}

Let $G$ be $\SS(d), \SB(d)$ or $\SC(d)$, and let $C_{\a^i}$ denote the conjugacy class
of a pure permutation in $G$ with cycle type $\a^i\in \OP(d)$.
We set
\begin{equation}\label{data}
M = \{\a^1,\cdots,\a^k\}
\end{equation}
and denote by
$H_G(M)$
the set of homomorphisms $\psi:\pi_1(\P^{\st})\to G$ sending the conjugacy class of  the loop $\delta_i$
into the conjugacy class $C_{\a^i}$.
Taking into account the action of $G$ by conjugation, we set
$$
h_G(M) = \frac{|H_G(M)|}{|G|}
$$

The groups $\SB(d)$ and $\SC(d)$ have natural homomorphisms to $\SS(d)$ by definition.
Given a homomorphism $\phi\in H_{\SS(d)}(M)$,
let $H_G(M;\phi)$ be the set of homomorphisms $\psi\in H_G(M)$ with commutative diagram
$$
\xymatrix{
 & G   \ar[d] \\
 \pi_1(\P^{\st})  \ar[ur]^{\psi} \ar[r]_{\ \ \phi} & \SS(d)  }
$$
where $G\to \SS(d)$ is the natural homomorphism. The weighted count of such homomorphisms is
$$
h_G(M;\phi) = \frac{|H_G(M;\phi)|}{|G|}
$$
By (\ref{Serre}) and definition, $h_{G}(M;\phi)$ and $h_G(M)$ satisfy
\begin{equation}\label{h-cha}
\sum_{\phi\in H_{\SS(d)}(M)}  h_{G}(M;\phi) = h_G(M)
 =
\sum_{\la\in G^\wedge} \Big(\frac{\dim V^\la}{|G|}\Big)^2 \prod_i \f_{C_i}(\la)
\end{equation}

\begin{rem}\label{Hurwitz}
There is a bijection between ramified covers $f:C\to \P^1$ (as in (\ref{intro-map}))
and orbits of the action of $\SS(d)$ on $H_{\SS(d)}(M)$ by conjugation. This bijection  is given by
the monodromy $M_f:\P^{\st}\to \SS(d)$ of the map $f$.
The order of the stabilizer of $M_f$ is
$|\Aut(f)|$  and hence
\begin{equation}\label{Hurwitz}
h_{\SS(d)}(M) = \frac{1}{|\SS(d)|}\sum_{O_f} |O_f| = \sum_f \frac{1}{|\Aut(f)|}
\end{equation}
where $O_f$ is the orbit of $M_f$.
This is the ordinary Hurwitz number that counts ramified covers of $\P^1$ with ramification data specified by $M$
in (\ref{data}).
\end{rem}

\begin{lemma}\label{lemma3}
Let $f:C\to \P^1$ and $O_f$ be as in Remark~\ref{Hurwitz} and let $\phi\in O_f$.
If the domain $C$ has $n$ (connected) components
$C_1,\cdots,C_n$, then we have
$$
|J_2(C)| = 2^{-d+n} |H_{\SB(d)}(M;\phi)|.
$$
\end{lemma}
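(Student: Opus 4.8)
\noindent
The plan is to realize the finite set $H_{\SB(d)}(M;\phi)$ as the set of (labelled) unramified double covers of $C$ and to extract the factor $2^{-d+n}$ from a gauge action. Write $C^{\st}=f^{-1}(\P^{\st})$, so $f:C^{\st}\to\P^{\st}$ is an unramified degree $d$ cover with monodromy $\phi:\pi_1(\P^{\st})\to\SS(d)$. Since $\SB(d)=\Z_2^d\rtimes\SS(d)$ is the wreath product $\SS(2)\wr\SS(d)$, a homomorphism $\psi:\pi_1(\P^{\st})\to\SB(d)$ lifting $\phi$ is exactly the datum of an unramified double cover of $C^{\st}$ together with a labelling of the two points in each fibre over the base point. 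Concretely $\psi(\gamma)=(c(\gamma),\phi(\gamma))$ for a $\phi$-twisted $1$-cocycle $c\in Z^1(\pi_1(\P^{\st}),\Z_2^d)$, and $c$ records the $\Z_2$-monodromy of the double cover along each of the $d$ sheets.

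First I would translate the defining constraint of $H_{\SB(d)}(M;\phi)$. For each $i$, a cycle of $\phi(\delta_i)$ of length $\a^i_j$ corresponds to the point $x^i_j$ with multiplicity $\a^i_j$, and, viewing $\psi(\delta_i)$ as a signed permutation in $\SS(2d)$, the local $\Z_2$-monodromy of the double cover around $x^i_j$ equals the sign of the corresponding cycle, i.e.\ the sum of the entries of $c(\delta_i)$ over that cycle. By the description of conjugacy classes in $\SB(d)\subset\SS(2d)$, the element $\psi(\delta_i)$ lies in the class $C_{\a^i}$ of a pure permutation precisely when all its cycles are positive, hence precisely when the double cover has trivial monodromy around every $x^i_j$. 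As $f_\rho$ is branched only over the $y^i$, this says exactly that the double cover of $C^{\st}$ extends to an unramified double cover of $C$. Thus sending $\psi$ to its underlying double cover identifies $H_{\SB(d)}(M;\phi)$ with the set of labelled unramified double covers of $C$, whose isomorphism classes are classified by $H^1(C,\Z_2)=J_2(C)$.

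Next I would account for the labelling by a gauge action. The constant subgroup $\Z_2^d\subset\SB(d)$ acts on lifts by conjugation, changing $c$ by the coboundary of $v\in\Z_2^d$; this preserves each class $C_{\a^i}$, hence preserves $H_{\SB(d)}(M;\phi)$, and its orbits are exactly the isomorphism classes of the associated double covers. The stabiliser of any lift is the invariant subgroup $(\Z_2^d)^{\phi}=H^0(\pi_1(\P^{\st}),\Z_2^d)$, consisting of vectors constant on each $\pi_1(\P^{\st})$-orbit of sheets; since these orbits are the connected components of $C$, the stabiliser is $\Z_2^n$ independently of the lift. Hence every double cover is hit by exactly $|\Z_2^d|/|\Z_2^n|=2^{d-n}$ lifts, so
\[
|H_{\SB(d)}(M;\phi)|\ =\ 2^{\,d-n}\,|J_2(C)|,
\]
which is the asserted identity after rearrangement.

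I expect the genuinely delicate point to be the bookkeeping pinning down the factor $2^{d-n}$: one must check that the gauge action is free modulo its lift-independent stabiliser and that this stabiliser is exactly $\Z_2^n$, which rests on the fact that the monodromy orbits on the $d$ sheets correspond bijectively to the $n$ connected components of $C$. The other step needing care is the dictionary translating the positivity condition $\psi(\delta_i)\in C_{\a^i}$ into unramifiedness of the double cover; this is purely local over each $y^i$ and follows from the cycle/sign calculus in $\SB(d)\subset\SS(2d)$, so it should be routine once set up.
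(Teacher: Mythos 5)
Your proof is correct and takes essentially the same approach as the paper's (which follows Lemma~3 of \cite{EOP}): both identify $J_2(C)$ with the orbits of the $\Z_2^d$-conjugation action on $H_{\SB(d)}(M;\phi)$ via unramified double covers, compute that every stabilizer is $\Z_2^n$ --- your invariant subgroup $(\Z_2^d)^{\phi}$ is exactly the group generated by the paper's sheet involutions $\si_1,\cdots,\si_n$ --- and then count orbit sizes. The cocycle and conjugacy-class bookkeeping you supply is precisely the detail the paper leaves implicit by citing \cite{EOP}.
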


\begin{proof}
The proof  is identical to that of Lemma~3 in \cite{EOP}.
Assigning to each $\rho\in J_2(C)$ the monodromy $M_{f_\rho}$ given in (\ref{mono-hom})
defines a bijection between $J_2(C)$ and  orbits of the action of $\Z_2^d\subset \SB(d)$ on
$H_{\SB(d)}(M;\phi)$ by conjugation.
Let $\rho=(\rho_1,\cdots,\rho_n)$  and $C_{\rho_i}\to C$ be the double cover determined by $\rho_i$ in $J_2(C_i)$.
Then the stabilizer of $M_{f_\rho}$
is generated by $\si_1,\cdots,\si_n$ where  $\si_i$ is the involution permuting the sheets of
$C_{\rho_i}\to C_i$. So, every orbit of the action of $\Z_2^d$ on $H_{\SB(d)}(M;\phi)$ has $2^{d-n}$ elements and hence
$$
|H_{\SB(d)}(M;\phi)| = \sum_{\text{orbits}} 2^{d-n} =
 \sum_{\rho\in J_2(C)} 2^{d-n}
$$
This completes the proof of the lemma.
\end{proof}

\subsection{Proof of (\ref{h=0})}

Let $\phi\in H_{\SS(d)}(M)$ be as in Lemma~\ref{lemma3}.
By our choice of the conjugacy classes $C_{\a^i}$ in $G$,
the homomorphism $\theta:\SC(d)\to \SB(d)$ induces an one-to-one function
\begin{equation}\label{1-1}
H_{\SC(d)}(M;\phi) \ \to\   H_{\SB(d)}(M;\phi).
\end{equation}
Moreover, $q_L(\rho)=0$ if and only if the monodromy
$M_{f_\rho}$ lies in the image of (\ref{1-1}) by Proposition~\ref{EOP-thm1}. Thus by
$|\SC(d)|=2|\SB(d)|=2^{d+1}d!$, (\ref{Arf}) and Lemma~\ref{lemma3}, we have
\begin{align}\label{step1}
(-1)^{p(f)}\ &=
2^{\frac{\chi(C)}{2}-n} \sum_{\rho\in J_2(C)} (-1)^{q_L(\rho)} =
2^{\frac{\chi(C)}{2}-d}\left(2\left| H_{\SC(d)}(M;\phi) \right| -
\left|H_{\SB(d)}(M;\phi)\right|\right) \notag \\
&= 2^{\frac{\chi(C)}{2}}d!\left[ 4h_{\SC(d)}(M;\phi) - h_{\SB(d)}(M;\phi)\right].
\end{align}
Now, it follows that
\begin{align*}
H^{0,+}_{\a^1,\cdots,\a^k}
&=
\sum_{f} \frac{(-1)^{p(f)}}{|\Aut(f)|}
 =
\sum_{\phi\in H_{\SS(d)}(M)} 2^{\frac{\chi(C)}{2}}\left[ 4h_{\SC(d)}(M;\phi) - h_{\SB(d)}(M;\phi)\right] \\
&=
2^{\frac{\chi(C)}{2}}\sum_{\la\in C(d)^\wedge_-}
2^2\Big(\frac{\dim V^\la}{|\SC(d)|}\Big)^2 \prod_i \f_{\a^i}(\la)\\
&=
2^{\frac{\chi(C)+2}{2}}
\Big(
 \sum_{\la\in \SP^+(d)} 2c_\la^{2} \prod_i \f_{\a^i}(\la)
 +\sum_{\la\in \SP^-(d)} c_\la^{2}\prod_i \f_{\a^i}(\la)
\Big),
\end{align*}
where the second equality follows from (\ref{Hurwitz}) and (\ref{step1}),
the third from (\ref{decomp-irr}) and (\ref{h-cha}), and the last from
(c) and (d) in Section~1.3. This completes the proof of (\ref{h=0}).

\section{A proof of Gunningham's formula (\ref{G}) }

For $\a^1,\cdots,\a^k\in\OP(d)$,   we set
\begin{equation*}\label{def}
H(h,p)_{\a^1,\cdots,\a^k} =
2^{-\frac{\chi(C)+\chi(D)}{2}}\, H^{h,p}_{\a^1,\cdots,\a^k},
\end{equation*}
where $\chi(D)=2-2h$ and $\chi(C)$ is the domain Euler characteristic.

Using the Einstein summation convention,
we raise indices by the formula
$$
H(h,p)_{\a^1,\cdots,\a^s}^{\b^1,\cdots,\b^r} =
\vt_{\b^1}\cdots\vt_{\b^r}H(h,p)_{\a^1,\cdots,\a^s,\b^1,\cdots,\b^s}
$$
For notational convenience, we denote multi-indices $\a^1,\cdots,\a^s$ by boldface index $\ba$.
Then the gluing theorem (Theorem~\ref{T:LP2}) can be written as:
\begin{equation}\label{gluing}
\begin{array}{c}
H(h_1+h_2,p_1+p_2)_{\ba,\be}^{\bb,\bd} =
H(h_1,p_1)_{\ba}^{\bb,\ga}H(h_2,p_2)^{\bd}_{\be,\ga},
\\
H(h+1,p)_{\ba}^{\bb} =
H(h,p)_{\ba,\ga}^{\bb,\ga}. {\displaystyle \phantom{\int_A^{B}}\, }
\end{array}
\end{equation}

\begin{rem}
By the additivity of Euler characteristic,
if
$\chi(C)$, $\chi(C_1)$ and $\chi(C_2)$ are the domain Euler characteristics
for
$H(h_1+h_2,p_1+p_2)_{\ba,\be}^{\bb,\bd}$, $H(h_1,p_1)_{\ba}^{\bb,\ga}$ and
$H(h_2,p_2)^{\bd}_{\be,\ga}$, then
$$
\chi(C) = \chi(C_1)\ +\ \chi(C_2)\ -\ 2\ell(\ga).
$$
The first formula in (\ref{gluing}) thus follows
from Theorem~\ref{T:LP2} and the definition $\vt_\ga=2^{\ell(\ga)+1}z_\ga$.
The proof of the second formula is the same.
\end{rem}

Now, observe that
\begin{align*}
&H(1,+)_{\a^1,\cdots,\a^k} = H(0,+)_{\a^1,\cdots,\a^k,\ga}^\ga \Spacing \\
&=
\sum_{\ga}\vt_\ga
\Big(
\sum_{\la\in \SP^+(d)} 2c_\la^2\, \prod_i\f_{\a^i}(\la)\f_\ga(\la)\f_\ga(\la)
+\sum_{\la\in \SP^-(d)} c_\la^2\, \prod_i\f_{\a^i}(\la)\f_\ga(\la)\f_\ga(\la)
\Big)
\Spacing \\
&=
\sum_{\la\in \SP^+(d)} \prod_i\f_{\a^i}(\la)
+\sum_{\la\in \SP^-(d)} \prod_i\f_{\a^i}(\la),
\end{align*}
where the first equality follows from the gluing theorem, the second from
(\ref{h=0}), the last from Lemma~\ref{key}.
In this way,
the gluing theorem, (\ref{EOP}) and (\ref{h=0}) inductively give
\begin{equation}\label{another-G}
H(h,\pm)_{\a^1,\cdots,\a^k} =
\sum_{\la\in \SP^+(d)}2^{1-h} c_\la^{2-2h}\prod_i\f_{\a^i}(\la)
\pm\sum_{\la\in \SP^-(d)} c_\la^{2-2h}\prod_i\f_{\a^i}(\la).
\end{equation}
This completes the proof of  (\ref{G}).

\begin{rem}\label{GGluing}
By Lemma~\ref{key}, one can easily obtain  (\ref{gluing}) from  (\ref{another-G}).
Therefore, the Gunningham's formula (\ref{G}) implies the gluing theorem (Theorem~\ref{T:LP2}).
\end{rem}

\section{A TQFT formalism}

This section discusses  a TQFT formalism via the gluing theorem.
Our approach is analogous to the that in \cite{BP2}. We refer to \cite{A,K} for Frobenius algebra and TQFT.

\subsection{Extended gluing theorem}

Recall that spin Hurwitz numbers are not defined for $(h,p)= (0,-)$ because
the only theta characteristic on $\P^1$ is the even theta characteristic $\O(-1)$.
In lieu of Lemma~\ref{key} and the formula (\ref{another-G}), if we define
\begin{equation}\label{extention}
H(0,-)_{\a^1,\cdots,\a^k} =
\sum_{\la\in \SP^+(d)} 2c_\la^{2}\prod_i\f_{\a^i}(\la)
-\sum_{\la\in \SP^-(d)} c_\la^{2}\prod_i\f_{\a^i}(\la),
\end{equation}
then the gluing theorem (\ref{gluing}) extends to include the case  $(h,p)=(0,-)$.
% including $(0,-)$, namely
%\begin{equation}\label{extention}
%\begin{array}{c}
%H(h,\mp)_{\ba,\be}^{\bb,\bd}\ =\
%H(h,\pm)_{\ba}^{\bb,\ga}H(0,-)^{\bd}_{\be,\ga},
%\\
%H(1,-)_{\ba}^{\bb}\ =\
%H(0,-)_{\ba,\ga}^{\bb,\ga}. {\displaystyle \phantom{\int_A^{B}aaaa} }
%\end{array}
%\end{equation}

\subsection{Functor}

The (extended) gluing theorem naturally induces a functor between tensor categories,
$$
\SH:\,2\Cob^\pm\ \to\ \Vect.
$$
Here $\Vect$ denotes the usual tensor category of complex vector spaces.
The objects of the category $2\Cob^\pm$ are finite unions of oriented circles.
The morphisms are given by pairs $(D,p)$, where $D$ is an  oriented cobordism (modulo diffeomorphism relative to the boundary) between two objects and $p\in \Z_2$.
We denote by
$$
D_r^s(h,p)
$$
the connected genus $h$ with parity $p$ cobordism  from a disjoint union of $r$
circles to a disjoint union of $s$ circles.
The composition of morphisms, obtained by concatenation of cobordisms, respects the $\Z_2$-grading (or parity).
The tensor structure on the category $2\Cob^\pm$ is given by disjoint union.

We define $\SH(S^1)=\F$ to be the vector space with basis $\{v_\a:\a\in\OP(d)\}$ labelled by odd partitions $\a\in \OP(d)$ and let
$$
\SH\left(S^1\coprod \cdots \coprod S^1\right) = \F\otimes\cdots \otimes\F.
$$
For connected  cobordisms $D_r^s(h,p)$, we define a linear map
$$
\SH\left(D_r^s(h,p)\right) : \F^{\otimes r}\  \to\  \F^{\otimes s}
\ \ \ \ \ \text{by}\ \ \ \ \ v_{\ba} \ \mapsto\ H(h,p)_{\ba}^{\bb}v_{\bb},
$$
where $v_{\ba}=v_{\a^1}\otimes \cdots \otimes v_{\a^r}$ for  $\ba=\a^1,\cdots,\a^r$.
Taking tensor product, we extend this definition to disconnected cobordisms.

$\SH$ takes the identity morphism $D_1^1(0,+)$  to the identity map on $\F$,
$$
v_\a \ \mapsto\   H(0,+)_\a^\b v_\b   =  v_\a,
$$
by Lemma~\ref{L:TQFT}\,(b) below.
$\SH$ also takes, by the (extended) gluing theorem,
the concatenation of cobordisms to the composition of linear maps
(cf. Proposition 4.1 of \cite{BP1}).
Therefore, $\SH$ is a well-defined functor.
In particular, one obtain a 2d (two dimensional)  TQFT, $2\Cob^+\to \Vect$, by restricting to even cobordisms.

Recall that $\1$ denotes the partition $(1^d)\in \SP(d)$. % and $\vt_\1=2^{d+1}d!=|\SC(d)|$.
The fact below follows from the same calculation of Hurwitz numbers because
the parity of the maps with domain  $\P^1$ is even.

\begin{lemma}\label{L:TQFT}
For $\a,\b\in\OP(d)$, we have:
\begin{itemize}
\item[(a)]
$H(0,+)^\a  =  \delta_{\1\a}v_\1$.
\item[(b)]
$H(0,+)_\a^\b  = \delta_{\a\b}$.
\end{itemize}
\end{lemma}

\subsection{Frobenius algebra}

The even cap $D^1(0,+)$ defines a unit $U:\C\to \F$ by
$U(1)=H(0,+)^\a v_\a =v_\1$
(see Lemma~\ref{L:TQFT}\,(a)), while
the even pair of pants $D_2^1(0,+)$ defines a multiplication $\F\otimes\F \to  \F$ by
$$
v_\a\otimes v_\b\ \mapsto\ v_\a v_\b  = H(0,+)_{\a\b}^\ga v_\ga.
$$
The algebra $\F$ is thus, by Lemma~\ref{str-cons} and (\ref{another-G}), isomorphic to
the algebra $\CZ_0^+$ in (\ref{Hspace}) with isomorphism $v_\a\mapsto u_\a$.
The Frobenius algebra structure on  $\F=\CZ_0^+$ is given by the  counit
$$
T:\F \  \to\  \C\ \ \ \text{where}\ \ \
T(v_\a) = \SH\left(D_1(0,+)\right)(v_\a) =   H(0,+)_\a  =  \delta_{\1\a}/\vt_\1.
$$

Using Lemma~\ref{key} and the structure constants $a_{\a\b}^\ga=H(0,+)_{\a\b}^\ga$,
one can find, by hand, an idempotent basis
$\left\{e_\la:\la\in \SP(d)\right\}$ ($e_\la e_\mu=\delta_{\la\mu}e_\la$):
$$
e_\la =
\left\{
\begin{array}{cl}
{\displaystyle
\sum_{\a\in\OP(d)} 2c_\la^2\, \vt_\a \,f_\a(\la)\,v_\a }
\ \ &\text{if}\ \ \la\in\SP^+(d) \\
{\displaystyle
 \sum_{\a\in\OP(d)} c_\la^2\, \vt_\a \,f_\a(\la)\,v_\a  }
\ \ &\text{if}\ \ \la\in\SP^-(d)
\end{array}
\right.
$$
The Frobenius algebra $\F=\CZ_0^+$ is semisimple since  $\F=\oplus \C e_\la$ where
$\C e_\la$'s are  one dimensional Frobenius algebras with counit $e_\la\mapsto t_\la=T(e_\la)$.
Since $\f_\1(\la)=1$,
$$
t_\la = \left\{
\begin{array}{cl}
2c_\la^2\ &\text{if}\ \ \la\in \SP^+(d) \\
c_\la^2\  &\text{if}\ \ \la\in \SP^-(d)
\end{array}
\right.
$$
Observe that the Frobenius algebra $\F$ has an involution given by
\begin{equation}\label{involution}
A := \SH\left(D_1^1(0,-)\right):\F\ \to\ \F.
\end{equation}

\begin{rem}
In \cite{G}, Gunningham constructed a fully extended 2d spin TQFT, which is a functor from the 2-category of spin cobordisms  to the category of superalgebras. The spin TQFT gives the formula (\ref{G}) and hence the gluing theorem (\ref{gluing}). In our case, on the other hand, we obtained from the gluing theorem a modified 2d TQFT, which includes both odd and even spin Hurwitz numbers and whose underlying Frobenius algebra has an additional structure, the involution (\ref{involution}) induced from (\ref{extention}). To see the full spin TQFT, one may need more information than the underlying Frobenius algebra with an involution.

\end{rem}

\subsection{Dimension zero GW invariants of K\"{a}hler surfaces}

The semisimplicity, $\F=\oplus \C e_\la$, implies that
 $e_\la$ is an eigenvector with eigenvalue $t_\la^{-1}$ for the genus adding operator
$$
G = \SH\left(D_1^1(1,+)\right):\F\ \to\ \F.
$$
One can also see, by simple calculation, that  $e_\la$ is an eigenvector with eigenvalue $(-1)^{\ell(\la)}$ for the involution $A$ of $\F$.

Now, noting that $U(1)=v_\1=\sum e_\la$, $H(h,p) = \SH\left(D(h,p)\right)(1)$ and
$$
\SH\left(D(h,p)\right) =
\SH\left(D_1(0,+)\circ D_1^1(h,p)\circ D^1(0,+)\right)
 = T\circ A^p\circ G^h\circ U,
$$
we can write the dimension zero GW invariants of K\"{a}hler surfaces (\ref{dim=0GW})
succinctly as
$$
GT_d^{h,p} = 2^{\frac{\chi(C)+\chi(D)}{2}}H(h,p) =
2^{\frac{\chi(C)+\chi(D)}{2}}
\Big( \sum_{\la\in\SP(d)} (-1)^{p\cdot\ell(\la)}\, t_\la^{\frac{\chi(D)}{2}} \Big),
$$
where $\chi(D)=2-2h$ is the Euler characteristic of the smooth canonical divisor and $\chi(C)$ is the domain Euler characteristic.

\vskip 1cm

\bigskip

\noindent {\em  Department of  Mathematics,  University of Central Florida, Orlando, FL 32816}

\medskip

\noindent {\em e-mail:}\ \ {\ttfamily junho.lee@ucf.edu}

\end{document}